\newtheorem{theorem}{Theorem}
\newtheorem{lemma}[theorem]{Lemma}
\newtheorem{proposition}[theorem]{Proposition}
\theoremstyle{definition}
\theoremstyle{remark}
\newcommand{\bR}{\mathbb{R}}
\newcommand{\bZ}{\mathbb{Z}}
\newcommand{\End}{\mathrm{End}}
\newcommand{\II}{\mathrm{I\! I}}
\newcommand{\ind}{\mathrm{index}}
\newcommand{\pt}{\partial_t}
\newcommand{\Ric}{\mathfrak{Ric}}
\newcommand{\scal}{\mathfrak{scal}}
\newcommand{\Sch}{\mathfrak{Sch}}
\newcommand{\signature}{\mathrm{signature}}
\newcommand{\tr}{\mathrm{tr}}
\begin{document}
\title{Boundaries of locally conformally flat manifolds in dimensions $4k$}
\author{Sergiu Moroianu}
\thanks{Partially supported by the CNCS project PN-II-RU-TE-2012-3-0492.}
\address{Institutul de Matematic\u{a} al Academiei Rom\^{a}ne\\
P.O. Box 1-764\\RO-014700
Bu\-cha\-rest, Romania}
\email{moroianu@alum.mit.edu}
\date{\today}
\begin{abstract}
We give global restrictions on the possible boundaries of compact, orientable, locally conformally
flat manifolds of dimension $4k$ in terms of integrality of eta invariants.
\end{abstract}
\maketitle

\section{Introduction and statement}
There exist obstructions for $4k-1$-manifolds bounding a compact $4k$-dimensional Riemannian 
manifold with special geometry. 
\begin{itemize}
\item Chern and Simons \cite{cs} showed that the Chern-Simons 
invariant of a closed oriented $3$-manifold conformally immersed in $\bR^4$ must vanish modulo $\bZ$.
\item Atiyah, Patodi and Singer \cite{apsii} proved that the eta invariant of the odd signature operator
on a closed $3$-manifold conformally
embedded in $\bR^4$ must vanish. 
\item Long and Reid \cite{longreid1} considered a possibly nocompact hyperbolic $4$-manifold with totally
geodesic boundary. They allowed non-compact cuspidal ends, and cut out the cusp by a flat totally umbilic section.
They obtained a compact hyperbolic manifold whose boundary components are either totally geodesic or umbilic and flat.
They proved that the eta invariant of the boundary must be an even integer. 
\item Xianzhe Dai \cite{dai} showed that the eta invariant of a locally conformally flat manifold $(M,h)$ vanishes modulo $2\bZ$ 
whenever $M$ is the oriented, totally umbilic boundary of a locally conformally flat compact manifold $(X,g)$.
Dai's result contains Long and Reid's as a particular case.
\end{itemize}
With the notable exception of \cite{cs}, the essential ingredient in the proof of the
above results is the Atiyah-Patodi-Singer signature formula for manifolds with 
boundary equipped with metrics of product type near the boundary \cite{apsi}. 
In dimension $4$, this formula reads for instance:
\begin{equation}\label{signafor}
-\tfrac{1}{24\pi^2}\int_X\tr(R^2) - \tfrac{1}{2}\eta(M,h) = \signature(X)\in\bZ.
\end{equation}

Our main result here is:
\begin{theorem}\label{tve}
Let $(X,g)$ be a compact oriented locally conformally flat Riemannian manifold of dimension $4k$, 
with smooth boundary $(M,h)$. Then the eta invariant of the odd signature operator of $(M,h)$, 
and also the eta invariant of the Dirac operator if $M$ has a spin structure, belong to $2\bZ$.
\end{theorem}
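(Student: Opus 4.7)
The plan is to apply the Atiyah--Patodi--Singer signature formula (and its $\hat A$-genus analogue for the Dirac operator in the spin case) to $(X,g)$, dealing with the fact that $g$ is not assumed to be of product type near $M$ by a Chern--Weil transgression argument. The essential geometric input is the classical observation, going back to Avez, that on any locally conformally flat Riemannian manifold the $L$-polynomial and the $\hat A$-polynomial vanish identically as differential forms, since both are universal polynomials in the Weyl tensor, which vanishes by hypothesis. Consequently the bulk integrand in the APS formula is pointwise zero on $(X,g)$.

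To put APS in its standard product form, I first replace $g$ by an auxiliary Riemannian metric $\tilde g$ on $X$ that agrees with $h$ on $M$, has product form $h+dt^2$ on a collar of $M$, and equals $g$ outside this collar. Then APS reads
\[
\signature(X) \;=\; \int_X L(\tilde g) \;-\; \tfrac{1}{2}\eta(M,h),
\]
and similarly in the spin case. Chern--Weil transgression along any smooth path of metrics from $g$ to $\tilde g$ (supported in the collar) produces a Chern--Simons form $TL$ on $X$ with $d\,TL = L(\tilde g)-L(g) = L(\tilde g)$, since $L(g)$ vanishes pointwise. Stokes' theorem then converts the bulk to a boundary term,
\[
\int_X L(\tilde g) \;=\; \int_M TL,
\]
and the APS identity rearranges to $\tfrac{1}{2}\eta(M,h) \;=\; \int_M TL - \signature(X)$. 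It therefore suffices to show $\int_M TL \in \bZ$; the factor of two in the claim $\eta\in 2\bZ$ is then automatic.

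The crux of the proof, and the step I expect to be the main obstacle, is establishing integrality of this boundary transgression without any umbilicity hypothesis on $M$. My plan is to exploit the LCF structure on the collar: $(X,g)$ admits locally a developing map into $S^{4k}$ and hence a flat conformal Cartan connection, and in that gauge $TL$ should be expressible as the secondary characteristic form of the Levi--Civita connection relative to a flat conformal reference, whose integral over $M$ computes the period of an integral characteristic class pulled back by the developing map, and so lies in $\bZ$. A more concrete alternative is a doubling construction: glue $X$ to an orientation-reversed copy along $M$, smooth the metric so as to preserve pointwise vanishing of $L$ away from a thin neck, use Novikov additivity ($\signature=0$ on the double), and read off the evenness of $\eta$ by matching the two APS identities. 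A last-resort fallback is to deform $M$ through an LCF cobordism to a totally umbilic hypersurface and invoke Dai's theorem directly. The spin/Dirac case proceeds along the identical skeleton, since $\hat A$ also vanishes pointwise on LCF manifolds.
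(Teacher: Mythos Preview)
Your framework is correct and is exactly how the paper begins: replace $g$ by a product-type metric $g_0$ built from the normal geodesic flow, apply APS to $g_0$, and use Chern--Weil transgression along the linear path of connections $\nabla^s=\nabla^0+s\theta$ to write $\int_X L(g_0)-\int_X L(g)$ as the boundary integral of a transgression form. Since $L(g)\equiv 0$ on an LCF manifold, everything reduces to controlling $\int_M TL$.

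The gap is that you aim only for integrality of $\int_M TL$, whereas the paper proves the much stronger and more direct fact that the transgression integrand vanishes \emph{pointwise} on $M$. Concretely, one shows $\imath_M^*\tr\!\big(\theta\, Q(R^s)\big)=0$ at every $p\in M$ where the Weyl tensor of $g$ vanishes. The proof is purely algebraic: in a frame $\{S_j\}$ of $T_pM$ diagonalizing the Weingarten operator, one uses (i) that for an LCF metric $\langle R^1_{U_1U_2}U_3,U_4\rangle=0$ whenever the $U_i$ are mutually orthogonal, (ii) the Codazzi--Mainardi identity to control $\imath_M^* d^{\nabla^0}\theta$, and (iii) an induction on the length $l$ showing that every term of $\theta(R^s)^l$, written in this basis, is off-diagonal as an endomorphism. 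Hence the trace is zero, the boundary correction vanishes identically, and $\tfrac12\eta(A)=\ind(D)\in\bZ$.

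Your three proposed alternatives are all problematic. The doubling argument is empty: with $\signature(-X)=-\signature(X)$ and $\eta(-M,h)=-\eta(M,h)$, comparing the two APS identities yields $0=0$, and the closed-manifold signature formula on the double likewise gives no information about $\int_M TL$. The developing-map/Cartan-connection idea is vague: the developing map is only globally defined on the universal cover, and you have not explained why your particular transgression form $TL$ (which depends on the chosen path of connections) computes the period of any integral class. The LCF cobordism to an umbilic hypersurface is not known to exist in general. More to the point, the boundary transgression is exactly the Gilkey correction term, which for generic metrics is \emph{not} an integer; any successful argument must use vanishing of the Weyl tensor at the boundary in an essential way, and the pointwise computation above is the cleanest way to do so.
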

Compared to the corresponding results of \cite{apsii} and \cite{dai}, we do not ask the interior metric $g$ to be flat 
(but only locally conformally flat), respectively we do not impose any restrictions on the second fundamental form 
other than the usual Codazzi-Mainardi equation. 

The proof is algebraic in nature, and relies on 
the analysis of a transgression form for the Pontriagin forms appearing 
in the Atiyah-Patodi-Singer index formula.

\section{The index formula for product metrics}

Let us recall the index formula of Atiyah, Patodi and Singer \cite{apsi}.
Let $(X,g$) be a Riemannian metric on a oriented compact manifold $X$, with boundary $M$, 
and denote by $h$ the induced metric on $M$. 
The metric $g$ is said to be of \emph{product type} if
there exists an embedding of manifolds with boundary $[0,\epsilon)\times M\hookrightarrow X$ such that the 
pull-back of the metric $g$ takes the form 
\[
g=dt^2+h.
\]
Intrinsically, this condition can be reformulated as 
\[L_\nu g=0\text{ near $M$,}\] 
where $L$ denotes Lie derivative and $\nu$ is 
the geodesic unit vector field normal to the boundary. 
\begin{theorem}[\cite{apsi}]\label{thin}
Let $(X,g)$ be a compact Riemannian manifold with boundary, 
and assume that the metric is of product type near the boundary $(M,h)$.
Let $(D,A)$ denote one of the following pairs of elliptic operators on $X$, respectively on $M$:
\begin{enumerate}
\item $D$ is the signature operator on $X$, and $A$ is the odd signature operator on $M$;
\item When $X$ has a fixed spin structure with spinor bundle $S$, 
$D$ is the Dirac operator on $(X,S)$, and $A$ is the Dirac operator on $M$ for the induced spin structure.
\end{enumerate}
Then the index of the $D$ (with the non-local APS boundary condition defined by $A$) equals
\begin{equation}\label{if}
\bZ\ni \ind(D)=\int_X \exp\left[\tfrac{1}{2}\tr\log\left(P\left(\tfrac{R}{2\pi i}\right)\right)\right]
- \tfrac{1}{2}\eta(A).
\end{equation}
where $R$ is the Riemannian curvature tensor of $g$, and $P$ is a specific Taylor series:
\[
P(x)=\begin{cases}
\frac{x}{\tanh(x)}&\text{for the signature operator;}\\
\frac{x/2}{\sinh(x/2)}& \text{for the Dirac operator.}
\end{cases}
\]
\end{theorem}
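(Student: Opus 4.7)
The plan is to prove the APS theorem by the heat kernel method on a cylindrical-end extension of $X$. First I attach the half-cylinder $(-\infty,0]\times M$ to $X$ along the boundary to obtain a complete non-compact manifold $\tilde X$, extending $g$ as the product $dt^2+h$ on the attached cylinder; since $g$ is already of product type near $M$, the resulting metric is smooth. The bundles and the operator $D$ extend canonically to an elliptic operator $\tilde D$ on $\tilde X$, which on the cylindrical end takes the separated form $\tilde D=\sigma(\pt+A)$ with $\sigma$ a unitary bundle isomorphism anticommuting with $A$. The key identification, obtained by a standard Fourier/Laplace analysis in the variable $t$, is that $\ker(\tilde D)\cap L^2(\tilde X)$ is in bijection with smooth solutions of $Du=0$ on $X$ satisfying the APS boundary condition, and similarly for $\tilde D^*$; hence $\ind(D)$ equals the $L^2$-index of $\tilde D$.

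Next I invoke the McKean--Singer formula
\[
\ind(\tilde D)=\tr\bigl(e^{-t\tilde D^*\tilde D}\bigr)-\tr\bigl(e^{-t\tilde D\tilde D^*}\bigr),
\]
valid for every $t>0$ once one shows that this difference of heat operators is trace class on $\tilde X$, which reduces, via the separated form of $\tilde D$ on the end, to decay properties of the spectral density of $A$. Since the right-hand side is constant in $t$, it suffices to analyze the limit $t\to 0^+$. I localize the computation by comparing, on the cylindrical end, the heat kernel of $\tilde D$ with the model heat kernel of $\sigma(\pt+A)$ on the full cylinder $\bR\times M$, which is computed explicitly via separation of variables and the spectral resolution of $A$.

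In the short-time limit the contribution splits into two pieces. Over the compact interior of $X$, the classical local index theorem (Getzler rescaling for the Dirac operator, the Atiyah--Bott--Patodi method for the signature operator) produces exactly the integrand $\exp\bigl[\tfrac12\tr\log P(R/2\pi i)\bigr]$ of \eqref{if}. Over the cylindrical end, the pointwise short-time asymptotics of the model cancel by a parity argument on $\bR\times M$, leaving only the non-local cylinder contribution. Expanding the latter in the spectral decomposition of $A$, each nonzero eigenvalue $\lambda$ of $A$ yields an integral essentially of the form $\int_0^\infty t^{-1/2}\sign(\lambda)e^{-t\lambda^2}\,dt$, and summing over $\lambda$ with Mellin regularization and analytic continuation recovers $-\tfrac12\eta(A)$, invoking the crucial APS regularity theorem that $\eta(A,s)$ is holomorphic at $s=0$.

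The main technical obstacle is the analytic setup on the non-compact manifold $\tilde X$: one must establish that $\tilde D$ is Fredholm on the appropriate $L^2$-domain (which generally requires a mild exponential weight, or a suitable enlargement of the APS boundary condition, when $A$ has zero modes), justify the $t$-independence of the supertrace despite the infinite volume by controlling the continuous-spectrum contribution coming from the cylindrical end, and prove the regularity at $s=0$ of $\eta(A,s)$. The last of these is the deepest point in the whole argument and is usually established by a careful local symbolic analysis on $M$ that exploits its odd dimensionality.
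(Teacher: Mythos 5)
This theorem is imported verbatim from Atiyah--Patodi--Singer \cite{apsi}; the paper gives no proof of it, so there is nothing internal to compare your argument against. Your outline is the original APS strategy (attach a half-cylinder, reduce to an $L^2$ problem, heat kernel, local index density from the interior, explicit spectral computation on the cylinder producing $\eta$), and as a roadmap it is the right one. However, two of its load-bearing steps are stated incorrectly, and they are precisely where the content of the theorem lives.

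First, the identification ``$\ind(D)$ equals the $L^2$-index of $\tilde D$'' fails when $A$ has zero modes: the $L^2$-kernel of $\tilde D$ does agree with the APS-kernel of $D$, but the cokernel must be computed using \emph{extended} $L^2$-sections (sections converging on the end to a limiting value in $\ker A$), and the discrepancy is $h:=\dim\ker A$. The formula your method actually yields is $\ind(D)=\int_X\alpha_0-\tfrac12\bigl(h+\eta(A)\bigr)$; note that \eqref{if} as printed also drops the $h/2$, and since the paper ultimately claims $\eta\in 2\bZ$ rather than a statement modulo $\bZ$, this term cannot be discarded silently. Second, the claim that $\tr(e^{-t\tilde D^*\tilde D})-\tr(e^{-t\tilde D\tilde D^*})$ is constant in $t$ is false on the noncompact $\tilde X$: the closed-manifold proof of $t$-independence breaks down in the presence of the continuous spectrum generated by the cylindrical end, and the $t$-variation of this regularized supertrace is exactly the mechanism that produces the $\eta$-term. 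APS circumvent this by working on the compact $X$ with the global boundary condition, where McKean--Singer is exact, and by building the boundary parametrix from the explicitly computed heat kernel of $\pt+A$ on the half-cylinder with the APS condition; Melrose's $b$-calculus treatment instead quantifies the failure of the $b$-trace to vanish on commutators. Either way this step requires an argument, not an appeal to the compact McKean--Singer identity. The points you flag as ``technical obstacles'' (trace-class properties, regularity of $\eta(s)$ at $s=0$, the Mellin evaluation of the cylinder term) are in fact the bulk of \cite{apsi}, so what you have is a broadly correct plan rather than a proof.
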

Only the monomials of degree at most $2k$ in the series $P$ contribute to the integrand.

The reader unfamiliar with the definition of the index, the signature and the Dirac operator, the eta invariant, 
the non-local APS boundary condition, and the proof of the index formula is referred to the original paper 
of Atiyah, Patodi and Singer \cite{apsi}. But in fact, none of these ingredients really matter in the present work.
Indeed, as a consequence of the index formula, the eta invariant of $A$ on $M$, 
defined in \cite{apsi} as a spectral invariant intrinsic to $M$ and the operator $A$, 
can be computed modulo
$\bZ$ using the index formula on some oriented manifold $X$ bounding $M$ (if any) with metric of product type:
\begin{equation}\label{emz}
\tfrac{1}{2}\eta(A)\equiv \int_X  e^{\frac{1}{2}\tr \log P\left(\frac{R}{2\pi i}\right)} \mod\bZ.
\end{equation}
Reversing the logical order of thought, assuming that $M$ is the boundary of some oriented $X$
we could \emph{define} the eta invariant modulo $2\bZ$ by this equation. The independence of the right-hand side 
on $(X,g)$ (with $g$ of product type) is a consequence of Hirzebruch's signature formula, 
respectively of the Atiyah-Singer index theorem.

If $g$ is locally conformally flat, the Weyl curvature tensor vanishes, and so the integrand in \eqref{emz}, 
which only depends
on the Weyl tensor, vanishes in positive degrees. It follows that for such metrics on $X$
the eta invariant of $A$ is an even integer, under the condition that $g$ is of product type. 
But we actually prove below that this last hypothesis is not necessary!

Our tool will be an algebraic extension of the index formula 
to certain metrics which are not of product type:
\begin{theorem}\label{ifW0}
The index formula \eqref{if} is valid for metrics $g$ which are not necessarily 
of product type near $M$, but are locally conformally flat to the first order near $M$, 
in the sense that the Weyl tensor of $g$ vanishes at every $p\in M$.
\end{theorem}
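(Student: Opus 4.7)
The plan is to reduce Theorem~\ref{ifW0} to the product-type case of Theorem~\ref{thin} and control the difference by a Chern--Simons transgression argument. First, I would fix a tubular neighbourhood $[0,\epsilon)_t \times M \hookrightarrow X$ and construct an auxiliary metric $g_0$ on $X$ which equals $g$ outside the collar, equals $dt^2+h$ close to $M$, and interpolates smoothly in between. Since $g_0|_M = h = g|_M$, the boundary operator $A$ and its eta invariant are unchanged. Applying Theorem~\ref{thin} to $(X,g_0)$ yields formula \eqref{if} with $g_0$ in place of $g$. Along the affine homotopy $g_s = (1-s)g_0 + sg$ the boundary metric is fixed, the APS spectral projection is $s$-independent, and $D_{g_s}$ is a continuous family of Fredholm operators sharing the same boundary condition, so the index is constant and $\ind(D_g) = \ind(D_{g_0})$.

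It therefore suffices to prove the purely topological equality
\[
\int_X \exp\!\left[\tfrac{1}{2}\tr\log P\!\left(\tfrac{R_g}{2\pi i}\right)\right] = \int_X \exp\!\left[\tfrac{1}{2}\tr\log P\!\left(\tfrac{R_{g_0}}{2\pi i}\right)\right].
\]
For this I would apply Chern--Simons transgression to the affine family of connections $\nabla^s = (1-s)\nabla^{g_0} + s\nabla^g$ on $TX$, producing a transgression form $\omega$ on $X$ — supported in the collar, since $g = g_0$ outside it — whose exterior derivative is the difference of the two integrands above. Stokes' theorem then collapses the difference of integrals to $\pm\int_M \omega|_M$.

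The main obstacle, and the algebraic heart of the proof, is to establish $\int_M \omega|_M = 0$ from the sole hypothesis $W_g|_M = 0$. At a boundary point $p$, $\omega|_M(p)$ is a universal polynomial in the one-parameter family of curvatures $R_{g_s}(p)$ and in the difference tensor $\nabla^g - \nabla^{g_0}$, which at $M$ reduces algebraically to the second fundamental form $\II$ of $M \subset (X,g)$. My plan is to decompose $R_{g_s}|_M$ via Gauss--Codazzi into its Weyl part, its Schouten-type part, and contributions involving $\II$, and then exploit the algebraic identity that the Pontriagin polynomial appearing in \eqref{if} vanishes on ``pure Schouten'' curvature tensors — the same mechanism that forces Pontriagin forms of LCF metrics to vanish and is invoked right after \eqref{emz}. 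The condition $W_g|_M = 0$ is tailored precisely so that the relevant polynomial combinations in $\omega|_M$ reduce to such vanishing Pontriagin-type expressions, yielding either pointwise vanishing on $M$ or, at worst, exactness and hence vanishing after integration. Verifying this invariant-theoretic identity — in particular, handling the terms mixing $R_{g_s}|_M$ with the difference-tensor factor along the full path $s \in [0,1]$ (where the interpolating connections need not be Levi-Civita of any metric with vanishing boundary Weyl) — is where I expect the real work to lie.
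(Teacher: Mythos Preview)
Your overall architecture matches the paper exactly: pass to an auxiliary product-type metric $g_0$ with the same boundary, apply Theorem~\ref{thin} to $g_0$, use the Chern--Simons transgression along the affine path $\nabla^s=\nabla^0+s\theta$ with $\theta=\nabla^g-\nabla^{g_0}$, invoke Stokes, and conclude with homotopy invariance of the Fredholm index. The paper does precisely this in its proof of Theorem~\ref{ifW0}, via Proposition~\ref{difexptr} and Proposition~\ref{w0p0}.

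The gap is in your proposed mechanism for killing the boundary term. You want to decompose $R^s|_M$ and invoke the fact that Pontriagin forms vanish on curvature tensors of the form $\Sch\owedge g$. But the transgression integrand is $\tr\!\big(\theta\,Q'(R^s)\big)\,e^{\tr Q(R^s)}$, not a Pontriagin form: it carries an extra odd-degree factor $\theta$ built from the second fundamental form, and for $s\in(0,1)$ the tensor $R^s=R^0+s\,d^{\nabla^0}\theta+s^2\theta^2$ is neither the curvature of a metric nor of Kulkarni--Nomizu type (the piece $d^{\nabla^0}\theta$ lives in the normal direction via Codazzi--Mainardi). So the ``pure Schouten $\Rightarrow$ Pontriagin vanishes'' identity has no obvious purchase on $\imath_M^*\tr(\theta\,(R^s)^l)$; you correctly flag this as the hard step, but the tool you name is not the one that works.

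What the paper actually does is different and more combinatorial. From $W_g|_M=0$ it extracts the sparsity statement that $\langle R^1_{U_1U_2}U_3,U_4\rangle=0$ for any four mutually orthogonal vectors (Lemma~\ref{3v0}); via Gauss this forces the same vanishing for $R^0=R^M$ on four distinct indices in a Weingarten-eigenbasis (Lemma~\ref{r04d}), and Codazzi gives an explicit two-index form for $\imath_M^*d^{\nabla^0}\theta$. With these pieces in hand, an induction on the number of factors shows that every monomial $\theta'\,A^{\alpha_1}\cdots A^{\alpha_l}$ in the expansion of $\imath_M^*\theta\,(R^s)^l$ is strictly off-diagonal in the eigenbasis, hence traceless. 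This index-tracking argument is the real content of Proposition~\ref{w0p0}, and it is not a consequence of any Pontriagin-type identity for Schouten curvature.
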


\section{Transgression forms}\label{sect3}

The Atiyah-Patodi-Singer index formula can be extended to metrics which are not of product type,
by adding a boundary correction term \cite{gilkey}. The rough nature of this term is well-understood: it is a polynomial
in the second fundamental form, its twisted exterior derivative, and the curvature tensor of the boundary. In dimension 
$4k=4$ the correction term is explicitly given in \cite{EGH}. This boundary term vanishes when the boundary is totally geodesic, or even just umbilic.
We want to prove that this transgression form vanishes under the hypothesis of theorem \ref{ifW0}.

Let 
\begin{align*}
[0,\epsilon)\times M\hookrightarrow X,&&(t,p)\mapsto \exp_p(t\nu)
\end{align*} 
be the embedding defined by the geodesic normal flow from the boundary, where
$\nu$ is the unit inner normal field. The pull-back of the metric $g$ takes the form 
\[g=dt^2+h(t)\]
where $h(t)$ is a smooth family of metrics on $M$ starting at $h(0)=h$.
We can then compute
\[L_{\pt} g=\pt h(t)\text{ near $M$,}\] 
Let $g_0$ be any metric on $X$ which equals $dt^2+h$ near $M$. Clearly $g_0$ is of product type, 
and shares with $g$ the same geodesics normal to the boundary.
Let $\nabla^1$, $\nabla^0$ denote the Levi-Civita covariant derivatives with respect to $g$, $g_0$, and define
\[
\theta:=\nabla^1-\nabla^0\in\Lambda^1(X,\End(TX)).
\]
Let also $\nabla^M$ denote the Levi-Civita connection of $(M,h)$. The second fundamental form $\II$ of the inclusion
$(M,h)\hookrightarrow (X,g)$ is defined for vector fields $U,V$ tangent to $M$
by
\[
\nabla^1_UV=\nabla^M_UV+\II(U,V)\pt.
\]
Notice that $\nabla^M_UV=\nabla^0_UV$ because $g^0$ is of product type. Let $W$ be the Weingarten operator,
$h(WU,V)=\II(U,V)$. Then for vectors $U,V$ tangent to $M$ we have:
\begin{align}\label{tetab}
\theta(U)V=\II(U,V)\pt,&& \theta(U)\pt = -W(U),&&\theta(\pt)=-W.
\end{align}
The last identity holds because $\pt$ is a geodesic vector field both for $g$ and $g^0$.

Consider the segment of connections
\[
\nabla^s=\nabla^0+s\theta
\]
linking $\nabla^0$ to $\nabla^1$ for $s\in[0,1]$, and 
let $R^s$ be the curvature tensor of $\nabla^s$ 
(so $R^1$ is the curvature of the initial metric $g$). We have
\begin{equation}\label{crs}
R^s=R^0+ s d^{\nabla^0}\theta +s^2 \theta^2\in \Lambda^*(X,\End(TX))[s].
\end{equation}

Here we pause to explain the notation. Whenever $\omega,\omega'\in\Lambda^*(X)$
and $B,B'\in\End(TX)$, we define the product of the endomorphism-valued forms $\omega\otimes B$ and
$\omega'\otimes B'$ as follows:
\[
\omega\otimes B\cdot \omega'\otimes B':= \omega\wedge\omega'\otimes BB'.
\]
Thus $\theta^2$ is an endomorphism-valued $2$-form. The bracket $[s]$ signifies that the dependence of
$R^s$ on $s$ is polynomial (of degree $2$). For later use, define the trace:
\[\tr(\omega\otimes B):=\tr(B)\omega\in\Lambda^*(X).
\]
We note the obvious trace identity:
\begin{equation}\label{traceid}
\tr(\omega\otimes B\cdot \omega'\otimes B')=(-1)^{\deg(\omega)\cdot\deg(\omega')}\tr(\omega'\otimes B'\cdot \omega\otimes B).
\end{equation}

\begin{lemma}\label{t2d}
Let $\{S_j\}_{1\leq j\leq 4k-1}$ be a local orthonormal basis for $TM$ consisting of eigenvectors of $W$, namely
$WS_j=\lambda_j S_j$ for $\lambda_j\in\bR$. Then on the boundary $M$ we have
\[\imath_M^*\theta^2=\sum_{i\neq j}\lambda_i\lambda_j S^i\wedge S^j\otimes\left[S^i\otimes S_j \right]=
\sum_{i< j}\lambda_i\lambda_jS^i\wedge S^j\otimes\left[S^i\otimes S_j - S^j\otimes S_i \right].\]
\end{lemma}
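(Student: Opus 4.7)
The plan is to expand $\theta$ in the adapted orthonormal frame $(S_1,\ldots,S_{4k-1},\partial_t)$ of $TX|_M$, with dual coframe $(S^1,\ldots,S^{4k-1},dt)$, and then compute $\theta^2$ using that $dt$ pulls back to zero on $M$. First I would read off the endomorphism $\theta(S_i)\in\End(TX)|_M$ from \eqref{tetab}. Since $S_j$ is a $W$-eigenvector, $\II(S_i,S_j)=h(WS_i,S_j)=\lambda_i\delta_{ij}$, and $\theta(S_i)\pt=-\lambda_iS_i$, so using the convention that $\alpha\otimes v$ denotes the endomorphism $X\mapsto\alpha(X)v$,
\[
\theta(S_i)=\lambda_i\,(S^i\otimes\pt)\ -\ \lambda_i\,(dt\otimes S_i).
\]
The remaining component $\theta(\pt)=-W$ carries the form factor $dt$ and therefore drops out under $\imath_M^*$.

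Next I would use the general identity for endomorphism-valued $1$-forms
\[
\theta\cdot\theta=\sum_{a<b}e^a\wedge e^b\otimes[\theta(e_a),\theta(e_b)],
\]
which, after pulling back to $M$ (so that only the indices $i,j\in\{1,\ldots,4k-1\}$ survive), reduces the task to computing the commutators $[\theta(S_i),\theta(S_j)]$.

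Writing $A_i:=S^i\otimes\pt$ and $B_i:=dt\otimes S_i$, so that $\theta(S_i)=\lambda_i(A_i-B_i)$, the four relevant products are immediate from the definitions: $A_iA_j=0=B_iB_j$ (the range of each lies in the kernel of the next), $A_iB_j=\delta_{ij}\,dt\otimes\pt$, and $B_iA_j=S^j\otimes S_i$. After antisymmetrizing in $i,j$, the symmetric terms $\delta_{ij}\,dt\otimes\pt$ cancel and
\[
[\theta(S_i),\theta(S_j)]=\lambda_i\lambda_j\,\bigl(S^i\otimes S_j-S^j\otimes S_i\bigr),
\]
which produces the second of the two expressions in the lemma. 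The first follows by rewriting the ordered sum over $i<j$ as an unordered sum over $i\neq j$, the antisymmetry of $S^i\wedge S^j$ combining with the antisymmetric combination $S^i\otimes S_j-S^j\otimes S_i$ to absorb the symmetrization.

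The only real pitfall is bookkeeping: fixing once and for all the convention for composing endomorphism-valued forms, the meaning of $S^i\otimes S_j$ as an endomorphism, and the sign convention in \eqref{tetab} that makes $\theta(\pt)=-W$ rather than $+W$. Once these are nailed down the lemma is a one-line bilinear computation; there is no geometric obstacle beyond the linear-algebraic cancellation $A_iB_j-B_jA_i$ on the normal direction.
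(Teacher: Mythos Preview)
Your proof is correct and follows the same route as the paper, which simply declares the lemma ``Evident from \eqref{tetab}.'' You have merely written out the bilinear computation in full; the commutator identity $\theta\cdot\theta=\sum_{a<b}e^a\wedge e^b\otimes[\theta(e_a),\theta(e_b)]$ and the subsequent cancellation are exactly what the paper leaves implicit.
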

\begin{proof}
Evident from \eqref{tetab}.
\end{proof}
From this lemma, we compute immediately $\imath_M^*\theta^3=0$ on $M$.

The following proposition is standard:
\begin{proposition}
Let $Q$ be a polynomial. Then 
\[\tr \left(Q(R^1)\right)-\tr\left( Q(R^0)\right) = d\int_0^1 \tr\left(\theta Q'(R^s)\right)ds.\]
\end{proposition}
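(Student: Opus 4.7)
The plan is to reduce the statement to the pointwise identity
\[
\frac{d}{ds}\tr(Q(R^s)) = d\,\tr(\theta\, Q'(R^s)),
\]
and then integrate in $s\in[0,1]$ by the fundamental theorem of calculus. By linearity in $Q$, it suffices to treat monomials $Q(x)=x^n$.

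First I would differentiate \eqref{crs} in $s$ to obtain
\[
\dot R^s=\frac{d}{ds}R^s=d^{\nabla^0}\theta+2s\theta^2.
\]
I would then observe that this equals $d^{\nabla^s}\theta$, since acting on an $\End(TX)$-valued $1$-form the extra term from $\nabla^s=\nabla^0+s\theta$ is $s(\theta\wedge\theta+\theta\wedge\theta)=2s\theta^2$ (recall $\theta$ is itself a $1$-form so the two occurrences anticommute and combine). This is the standard identity $\dot R^s = d^{\nabla^s}\dot\nabla^s$.

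Next, for $Q(x)=x^n$, I would use the graded Leibniz rule for $d/ds$ to write
\[
\frac{d}{ds}\tr\bigl((R^s)^n\bigr) = \sum_{i=0}^{n-1}\tr\bigl((R^s)^i\,\dot R^s\,(R^s)^{n-1-i}\bigr) = n\,\tr\bigl(\dot R^s\,(R^s)^{n-1}\bigr),
\]
where the cyclic rearrangement is permitted without sign by \eqref{traceid} because $R^s$ and $\dot R^s$ are both $2$-forms, hence of even total degree. In general this gives $\tfrac{d}{ds}\tr(Q(R^s))=\tr(Q'(R^s)\,\dot R^s)$.

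Finally, I would use the second Bianchi identity $d^{\nabla^s}R^s=0$, which implies $d^{\nabla^s}Q'(R^s)=0$, together with the fact that for any endomorphism-valued form $B$ one has $d\tr(B)=\tr(d^{\nabla^s}B)$ (the commutator correction $[s\theta,B]$ is killed by the trace). Applying the graded Leibniz rule for $d^{\nabla^s}$ to the product $\theta\cdot Q'(R^s)$,
\[
d\,\tr(\theta\,Q'(R^s))=\tr\bigl(d^{\nabla^s}\theta\cdot Q'(R^s)\bigr) = \tr(Q'(R^s)\,\dot R^s),
\]
again by cyclicity (and the fact that all terms have even form-degree, so \eqref{traceid} introduces no sign). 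Combining the two computations yields the desired identity for each $s$, and integrating in $s\in[0,1]$ gives the proposition. There is no serious obstacle here; the only things to watch are the graded signs in the trace identity, and these all turn out trivial because the relevant forms live in even degree.
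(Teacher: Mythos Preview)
Your argument is correct and follows essentially the same route as the paper: differentiate $R^s$ to get $\dot R^s=d^{\nabla^s}\theta$, use the trace identity to collapse the Leibniz sum, invoke the second Bianchi identity to kill $d^{\nabla^s}Q'(R^s)$, and pass $d$ through $\tr$. The paper is terser but the logic is identical; your only addition is the explicit verification that $d^{\nabla^0}\theta+2s\theta^2=d^{\nabla^s}\theta$, which the paper states without computation.
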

Here $Q(R^s)$ is an endomorphism-valued form as explained above.
\begin{proof}
Since $\frac{d}{ds}\nabla^s=\theta$, we have $\frac{d}{ds}R^s=d^{\nabla^s}\theta$. Therefore, using the trace identity,
\[\tfrac{d}{ds}\tr \left(Q(R^s)\right)=\tr\left(d^{\nabla^s}(\theta) Q'(R^s)\right).\] 
By the second Bianchi identity, 
$d^{\nabla^s}R^s=0$, so $d^{\nabla^s}Q'(R^s)=0$ and therefore 
\[d^{\nabla^s}(\theta) Q'(R^s)=d^{\nabla^s}\left(\theta Q'(R^s)\right).\]
To conclude, use the identity $\tr(d^\nabla \cdot)=d\tr(\cdot)$ and integrate from $0$ to $1$.
\end{proof}
The trace $\tr \left(Q(R^s)\right)$ is a differential form depending polynomially on $s$. 
Its exponential is well-defined, and is again a polynomial in $s$ with coefficients in $\Lambda^*(X)$. Moreover we have:
\begin{proposition}\label{difexptr}
Let $Q$ be a polynomial. Then 
\[e^{\tr \left(Q(R^1)\right)}-e^{\tr\left( Q(R^0)\right)} = d\int_0^1 \tr(\theta Q'(R^s))e^{\tr\left( Q(R^s)\right)}ds.\]
\end{proposition}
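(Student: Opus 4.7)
The plan is to differentiate $f(s):=e^{\tr(Q(R^s))}$ with respect to $s$, show that the derivative is exact, and then integrate from $0$ to $1$.

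First, I would note that $\tr(Q(R^s))$ is a sum of forms of even degree (since $R^s$ is a $2$-form), so it is central for the graded-commutative product on $\Lambda^*(X)$. Consequently both the power series defining $f(s)$ converges formally to a polynomial in $s$ with values in $\Lambda^*(X)$, and the usual chain rule applies:
\[
\tfrac{d}{ds} f(s) = \tfrac{d}{ds}\bigl(\tr Q(R^s)\bigr)\cdot f(s).
\]
From the computation in the proof of the previous proposition (applied before taking $d$), we already know that
\[\tfrac{d}{ds}\tr(Q(R^s))=\tr\bigl(d^{\nabla^s}(\theta)\,Q'(R^s)\bigr)=\tr\bigl(d^{\nabla^s}(\theta\, Q'(R^s))\bigr)=d\tr\bigl(\theta\, Q'(R^s)\bigr),\]
where the middle equality uses the Bianchi identity $d^{\nabla^s}Q'(R^s)=0$ and the last uses $\tr\circ d^{\nabla}=d\circ\tr$.

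Next I would show $f(s)$ is closed. Indeed, $d\tr(Q(R^s))=\tr(d^{\nabla^s}Q(R^s))=0$ by Bianchi, and since $\tr(Q(R^s))$ is of even degree, the exterior derivative of its exponential is $d(\tr Q(R^s))\cdot f(s)=0$. Combining this with the Leibniz rule (and again evenness of the degrees at play so no sign intervenes), I get
\[
d\bigl[\tr(\theta Q'(R^s))\cdot f(s)\bigr]
= d\tr(\theta Q'(R^s))\cdot f(s)
=\tfrac{d}{ds}f(s).
\]
Integrating this identity on $s\in[0,1]$ and exchanging $d$ with the $s$-integral (legitimate since the integrand is a polynomial in $s$) yields the stated formula.

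The main subtlety to monitor is the interplay of the exterior derivative $d$ with the $s$-derivative and with the graded-commutative product: one has to verify that all forms one multiplies $f(s)$ by are of even total degree or that otherwise the sign works out, and that $f(s)$ itself is $d$-closed so only the first Leibniz term survives. Both points follow from the fact that $R^s$ has form-degree two and from the Bianchi identity for $\nabla^s$; once these are in place the proof is just the chain rule and Stokes-style integration in the parameter $s$.
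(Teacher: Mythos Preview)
Your proposal is correct and follows essentially the same route as the paper's (very terse) proof: differentiate $e^{\tr Q(R^s)}$ in $s$ using the chain rule (valid because $\tr Q(R^s)$ is even), identify the derivative via the previous proposition's computation as $d\tr(\theta Q'(R^s))\cdot e^{\tr Q(R^s)}$, and integrate in $s$. You are in fact more explicit than the paper in checking that $e^{\tr Q(R^s)}$ is $d$-closed so that $d$ can be pulled outside the product and then outside the $s$-integral; the paper leaves this implicit. One small remark: in your Leibniz step the form $\tr(\theta Q'(R^s))$ has \emph{odd} degree, so a sign would appear in front of the second Leibniz term---but this is harmless precisely because that term vanishes by the closedness of $f(s)$, which you have already established.
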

The proof goes exactly like in the preceding proposition. We use the fact that $\tr \left(Q(R^s)\right)$ is an even form and the trace identity
to obtain 
\[\tfrac{d}{ds}\exp(\tr \left(Q(R^s)\right))= d\tr(\theta Q'(R^s))\exp(\tr \left(Q(R^s)\right)).\]

\section{Transgression for locally conformally flat metrics on $X$}

The \emph{Schouten tensor} of the metric $g$ is defined by
\[\Sch=\tfrac{1}{n-2}\left(\Ric-\tfrac{\scal}{2(n-1)}\right),\]
where $n=\dim(X)=4k$. We henceforth assume that the Weyl 
component of the curvature tensor of $g$ vanishes at a point $p\in M$. At such $p$ we then have \cite[1.116]{besse}
\[R^1=\Sch\owedge g,\]
meaning that
\begin{equation}\label{KN}\begin{split}
(\Sch\owedge g)(U_1,U_2,U_3,U_4)={}&\Sch(U_1,U_4)\langle U_2,U_3\rangle + \Sch(U_2,U_3)\langle U_1,U_4\rangle\\
{}&-\Sch(U_1,U_3)\langle U_2,U_4\rangle - \Sch(U_2,U_4)\langle U_1,U_3\rangle.
\end{split}\end{equation}
In particular, we deduce the following:
\begin{lemma}\label{3v0}
Assume that the Weyl tensor of $g$ vanishes at $p\in M$. Then
for mutually orthogonal vectors $U_1,U_2,U_3,U_4\in T_p X$ we have $\langle R^1_{U_1U_2}U_3,U_4\rangle=0$.
\end{lemma}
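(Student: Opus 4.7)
The plan is entirely algebraic: the lemma reduces to plugging mutually orthogonal vectors into the Kulkarni--Nomizu decomposition \eqref{KN}, which is available at $p$ precisely because the Weyl component of $R^1$ is assumed to vanish there.

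First I would identify $\langle R^1_{U_1 U_2}U_3, U_4\rangle$ with the fully covariant component $R^1(U_1, U_2, U_3, U_4)$ under the standard convention, and then invoke the hypothesis and \cite[1.116]{besse} to write $R^1 = \Sch \owedge g$ at $p$. This reduces the claim to showing that
\[
(\Sch \owedge g)(U_1, U_2, U_3, U_4) = 0
\]
for any mutually orthogonal quadruple $U_1, U_2, U_3, U_4 \in T_pX$.

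Next I would expand this expression using \eqref{KN}. Each of the four summands on the right-hand side of \eqref{KN} is a product of a Schouten-tensor component $\Sch(U_i, U_j)$ and an inner product $\langle U_k, U_\ell\rangle$, where the pair $\{k,\ell\}$ is the complement of $\{i,j\}$ in $\{1,2,3,4\}$. In particular, in each summand the surviving inner product factor is one of $\langle U_2, U_3\rangle$, $\langle U_1, U_4\rangle$, $\langle U_1, U_3\rangle$, or $\langle U_2, U_4\rangle$, all of which vanish by mutual orthogonality. Therefore every summand is individually zero and the total vanishes.

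I do not anticipate any obstacle: the result is a one-line combinatorial consequence of the Kulkarni--Nomizu structure. It is worth emphasizing that the Schouten tensor $\Sch$ itself is allowed to be arbitrary at $p$; the vanishing is achieved purely by the way \eqref{KN} pairs each Schouten factor with an orthogonality-killed inner product. The dimensional assumption $n = 4k \geq 4$ is comfortably sufficient for the Weyl decomposition of the curvature to be meaningful, so no further hypothesis is needed.
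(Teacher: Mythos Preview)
Your argument is correct and is essentially identical to the paper's own proof: both simply observe that when $R^1=\Sch\owedge g$ at $p$, every term on the right-hand side of \eqref{KN} contains an inner product $\langle U_k,U_\ell\rangle$ with $k\neq\ell$, which vanishes by mutual orthogonality. The paper compresses this into a single sentence, but the content is the same.
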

\begin{proof}
If the vectors $U_j$'s are mutually orthogonal, all the scalar products in \eqref{KN} vanish.
\end{proof}

From $g$ we have constructed a product type metric $g_0$, and we have denoted
by $\theta$ the difference between the covariant derivatives corresponding to $g$ and to $g_0$.
Let $\imath_M:M\hookrightarrow X$ denote the inclusion map.
The main result of this section is the following vanishing result:
\begin{proposition}\label{w0p0}
If the Weyl tensor of $(X,g)$ vanishes at $p\in M$, 
then for every polynomial $Q$, the pull-back 
\[\imath_M^*\tr(\theta Q(R^s))\in\Lambda^*(M)\]
vanishes at $p$.
\end{proposition}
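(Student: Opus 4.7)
The strategy is to exploit the $\mathbb{Z}/2$-graded decomposition of $\End(T_pX)$ induced by the orthogonal splitting $T_pX = T_pM\oplus\mathbb{R}\partial_t$. Write $\End(T_pX)=\End^+\oplus\End^-$, where $\End^+$ preserves the splitting and $\End^-$ interchanges the two summands; any $A\in\End^-$ has vanishing diagonal blocks and hence zero trace. Correspondingly $\mathfrak{so}(T_pX)=\mathfrak{so}^+\oplus\mathfrak{so}^-$ with $\mathfrak{so}^+=\mathfrak{so}(T_pM)$ and $\mathfrak{so}^-\cong T_pM$ via $V\mapsto A_V$, $A_V u:=\langle V,u\rangle\partial_t-\langle\partial_t,u\rangle V$.

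Work in the orthonormal basis $\{S_1,\ldots,S_{4k-1},\partial_t\}$ of lemma \ref{t2d}. By \eqref{tetab}, the pullback $\imath_M^*\theta$ is $\mathfrak{so}^-$-valued at $p$: $\theta(S_i)=\lambda_i A_{S_i}$. Decompose $R^s=R^s_++R^s_-$ according to $\mathfrak{so}^\pm$; from \eqref{crs}, $R^0$ and $\theta^2$ (cf.\ lemma \ref{t2d}) lie in $\mathfrak{so}(T_pM)\subset\mathfrak{so}^+$, so
\[
R^s_+(S_j,S_k)=R^M(S_j,S_k)+s^2\theta^2(S_j,S_k),\qquad R^s_-(S_j,S_k)=s\,(d^{\nabla^0}\theta)(S_j,S_k).
\]
A direct computation using $\nabla^0_{S_j}\partial_t=0$ along $M$ and the Codazzi--Mainardi identity shows $\langle(d^{\nabla^0}\theta)(S_j,S_k)S_l,\partial_t\rangle=\langle R^1(S_j,S_k)S_l,\partial_t\rangle$. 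The vanishing of the Weyl tensor at $p$ gives $R^1=\Sch\owedge g$, whence
\[
R^s_-(S_j,S_k)=s\bigl(\Sch(S_j,\partial_t)A_{S_k}-\Sch(S_k,\partial_t)A_{S_j}\bigr).
\]
Moreover, the Gauss equation combined with $R^1=\Sch\owedge g$ makes every matrix entry $\langle R^s_+(S_j,S_k)S_l,S_m\rangle$ a finite sum of monomials each containing at least one Kronecker $\delta_{ab}$ with $a,b\in\{j,k,l,m\}$.

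To prove $\imath_M^*\tr(\theta Q(R^s))|_p=0$, expand $Q(R^s)$ as a sum of monomials in $R^s$; the pullback at $p$ is then a sum of terms
\[
\tr\bigl(\theta(S_{i_0})R^s(S_{i_1},S_{i_2})\cdots R^s(S_{i_{2N-1}},S_{i_{2N}})\bigr)\,S^{i_0}\wedge S^{i_1}\wedge\cdots\wedge S^{i_{2N}},
\]
summed over the indices. By parity, any word containing an even number of $\mathfrak{so}^-$-type factors yields an $\End^-$-valued endomorphism with vanishing trace, so only words with an odd number of $R^s_-$ factors contribute. For these contributions, the scalar trace is evaluated via the block-matrix form of $\theta$ and $R^s_\pm$: each $R^s_-$ inserts an $A$-vector in the family $\{A_{S_a}\}$, each $R^s_+$ inserts a matrix whose entries carry the Kronecker structure above, and the final trace expands as a finite sum of Kronecker-$\delta$-and-matrix-entry monomials. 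One verifies that every such monomial contains at least one Kronecker $\delta_{i_a i_b}$ with $a\neq b$ in $\{0,1,\ldots,2N\}$; since $S^{i_0}\wedge S^{i_1}\wedge\cdots\wedge S^{i_{2N}}$ is fully antisymmetric in its indices, each such Kronecker forces two of the wedge factors to coincide, killing the form.

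The main obstacle is the verification that the Kronecker structure propagates through arbitrary matrix products: intermediate summations may contract Kroneckers by substitution, but each factor---either an $A_{S_a}$ from $\theta$ or $R^s_-$, or a Kronecker-laden entry of $R^s_+$---contributes its own pairings, and the bookkeeping must show that in every final monomial at least one Kronecker linking two of the wedge indices $i_0,\ldots,i_{2N}$ survives.
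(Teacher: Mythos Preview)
Your setup is essentially the paper's own: the $\mathbb{Z}/2$--grading of $\End(T_pX)$ is the same decomposition the paper encodes by splitting $\imath_M^*\theta=\theta'-\theta''$, and your ``Kronecker structure'' of $R^s_+$ is exactly the content of Lemmas~\ref{t2d} and~\ref{r04d}. The parity reduction (only words with an odd number of $R^s_-$ factors can have nonzero trace) is correct and plays the role of the paper's even/odd split on $l$.

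The genuine gap is the step you yourself flag as ``the main obstacle'': you assert that ``one verifies'' every surviving monomial carries a Kronecker $\delta_{i_ai_b}$ linking two of the external wedge indices, but you do not verify it. This is the entire content of the proposition; the rest is bookkeeping. Intermediate summations can and do contract Kroneckers, so the claim that one always survives requires proof, not inspection. The paper supplies precisely this missing argument via an induction on the word length: it shows that $\theta'A^{\alpha(1)}\cdots A^{\alpha(l)}$ is always of the form $\sum_{i\neq j,\,I}C_{i,j,I}\,S^I\otimes[S_i\otimes S^j]$ with $i,j\in I$. The base case forces $\alpha(1)=2$ (i.e., the first factor must be $R^s_-$), and the induction step shows that right multiplication by $A^2=R^s_-$ or $A^3=\theta^2$ \emph{kills} the product (because the matrix index $j\in I$ forces a repeated wedge factor), while right multiplication by $A^1=R^M$ preserves the stated form. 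Since the trace picks out $i=j$, which is excluded, the trace vanishes.

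In your language: the key point is not only that each factor carries Kroneckers, but that the running ``column index'' of the partial product is always one of the already--used wedge indices, so the next factor either repeats a wedge index (killing the form) or passes the property along. Until you prove an invariant of this sort, the argument is incomplete.
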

\begin{proof}
Let $\{S_j\}_{j=1}^{4k-1}$ be an orthonormal basis of $T_pM$ consisting of eigenvectors
for the Weingarten operator $W$. We have
\begin{align}
&\imath_M^*\theta= \sum_{i=1}^{4k-1} \lambda_i S^i\otimes [S^i\otimes\pt-dt\otimes S_i],\label{imte}\\
&\imath_M^* R^0=R^M,\nonumber\\
&\langle d^{\nabla^0}\theta(S_i,S_j)S_h,\pt\rangle=\langle d^{\nabla^0}W(S_i,S_j),S_h\rangle
=\langle R^1_{S_iS_j}S_h,\pt\rangle=:R^1_{ijh\nu}.\label{CoMa} \\
\intertext{Also from \eqref{imte}, using the fact that $\pt$ is parallel with respect to $g_0$,}
&\langle d^{\nabla^0}\theta(S_i,S_j)S_h,S_l\rangle=0.\label{dnt0}
\end{align}
Equation \eqref{CoMa} is just the Codazzi-Mainardi constraint for the isometric embedding $(M,h)\subset (X,g)$.
Since $\pt\perp M$, from  \eqref{CoMa}, \eqref{dnt0} and lemma \ref{3v0} we deduce
\begin{equation}\label{dnat}
\imath_M^*d^{\nabla^0}\theta = \sum_{a<b} S^a\wedge S^b\otimes[R^1_{abb\nu}(S^b\otimes\pt-dt\otimes S_b)
+R^1_{aba\nu}(S^a\otimes\pt-dt\otimes S_a)].
\end{equation}
\begin{lemma}\label{r04d}
Let $1\leq i,j,h,l\leq 4k-1$ be four distinct indices, and assume that the Weyl tensor of $g$ vanishes at $p\in M$.
Then $\langle R^0_{S_i S_j}S_h, S_l\rangle=0$.
\end{lemma}
\begin{proof}
From equation \eqref{crs} with $s=1$, Lemma \ref{t2d} becomes
\[
\langle R^0_{S_i S_j}S_h, S_l\rangle + \langle d^{\nabla^0}\theta(S_i, S_j)S_h, S_l\rangle
+\langle \theta^2(S_i, S_j)S_h, S_l\rangle=0.
\]
But the last two terms vanish by \eqref{dnat} and Lemma \ref{3v0}.
\end{proof}

By linearity, we may assume that $Q$ is a monomial, $Q(x)=x^l$. Notice that $\imath_M^*\theta$ and 
$\imath_M^* R^s$ are skew-adjoint endomorphism-valued forms, the latter being also of even degree. If $l$ is even,
by taking adjoints it follows that the trace of $\imath_M^* \theta \left( R^s\right)^l$ vanishes. Assume now that $l$ is odd.
We will show below that the diagonal of the form-valued endomorphism 
\[\imath_M^* \theta \left( R^s\right)^l= \imath_M^* \theta (R^0+s d^{\nabla^0}\theta+s^2\theta^2)^l\in 
\Lambda^{2l+1}(M,\mathrm{gl}(n))\]
written in the basis $\{\pt,S_1,\ldots,S_{4k-1}\}$ consists of $0$'s.
Rewrite \eqref{tetab} as
\begin{equation}\label{ect}
\imath_M^* \theta = \sum_{i=1}^{4k-1} \lambda_i S^i\otimes [S^i\otimes \pt- dt\otimes S_i],
\end{equation}
and decompose $\imath_M^*\theta$ into
\begin{align*}
\imath_M^* \theta=\theta'-\theta'',&&\theta':= \sum_{i=1}^{4k-1} \lambda_i S^i\otimes [S_i\otimes dt],&&
\theta'':=\sum_{i=1}^{4k-1} \lambda_i S^i\otimes[\pt\otimes S^i].
\end{align*}
Since $\theta''$ is the transpose of $\theta'$ and $R^s$ is skew-symmetric, it follows that 
\[
\imath_M^*\tr(\theta (R^0+s d^{\nabla^0}\theta+s^2\theta^2)^l)=2\tr(\theta '\imath_M^*(R^0+s d^{\nabla^0}\theta+s^2\theta^2)^l).
\]
Introduce the following unified notation for the $2$-forms entering in the expression of $\imath_M^*R^s$:
\begin{align*}
A^1=\imath_M^*R^0=R^M,&&A^2=\imath_M^*d^{\nabla^0}\theta,&&A^3=\imath_M^*(\theta\cdot\theta).
\end{align*}

\begin{lemma}
For every $l\geq 1$, and $\alpha_1,\ldots,\alpha_l\in\{1,2,3\}$, the $2l+1$-form 
\[
\theta' A^{\alpha(1)}\ldots A^{\alpha(l)}
\]
can be written as
\[
\sum_{i,j=1}^{4k-1} \sum_I C_{i,j,I} S^I\otimes[S_i\otimes S^j]            
\]
where $I$ is a multi-index of length $2l+1$, and the coefficient $C_{i,j,I}\in C^\infty(M)$ 
equals zero unless $i,j\in I$ and $i\neq j$.
\end{lemma}
\begin{proof}
Remark that $\theta' A^\alpha=0$ unless $\alpha=2$, because the endomorphism component 
of $\theta'$ restricted to $T_pM$ vanishes, while $A^1$ and $A^3$ map $T_pX$ into $T_pM$. 
Thus we can assume that $\alpha(1)=2$. The form
$\theta'\imath_M^*d^{\nabla^0}\theta$ can be written, using \eqref{ect} and \eqref{dnat}, as
\[
\sum_{I_1,I_2,I_3=1}^{4k-1}C_{I} S^{I} \otimes [S_{I_1}\otimes S^{I_3}]. 
\]
In the above sum, the wedge product $S^I=S^{I_1}\wedge S^{I_2}\wedge S^{I_3}$ 
clearly vanishes unless the three indices are mutually distinct, so we retain only the terms with $I_1\neq I_3$. 
Once we established this initial step, the proof proceeds by induction. 
Assume that the conclusion of the lemma holds for $l$. We want to prove it for the product with one additional 
factor $A^{\alpha(l+1)}$. We claim that under the hypotheses $i,j\in I$, $I\neq j$, the product 
\begin{equation}\label{pral} 
S^I\otimes[S_i\otimes S^j]\cdot A^{\alpha(l+1)}
\end{equation}
vanishes for $\alpha(l+1)=2$. Indeed, using \eqref{dnat}, we see that in order to have a non-zero term in the product
\[
S^I\otimes[S_i\otimes S^j]\cdot S^a\wedge S^b\otimes [R^1_{abb\nu}(S^b\otimes\pt-dt\otimes S_b)
+R^1_{aba\nu}(S^a\otimes\pt-dt\otimes S_a)],
\]
we should have either $j=a$ or $j=b$. Since by the induction hypothesis $j\in I$, 
it follows that in such a case the exterior product
$S^I\wedge S^a\wedge S^b$ vanishes. 

Using Lemma \ref{t2d}, exactly the same argument as above shows the vanishing of the product \eqref{pral} in the case 
where $\alpha(l+1)=3$, i.e., $A^{\alpha(l+1)}=A^3=\theta\cdot\theta$. 

In the remaining case $\alpha(l+1)=1$ we multiply $S^I\otimes[S_i\otimes S^j]$ to the right by $A^1=R^M$. 
By Lemma \ref{r04d},
\[R^M=\sum_{a,b,c=1}^{4k-1} S^a\wedge S^b\otimes[R_{abac}^0(S_a\otimes S^c-S_c\otimes S^a)].\]
The term corresponding to a triple $(a,b,c)$ in the product $S^I\otimes[S_i\otimes S^j]\cdot R^M$ 
is non-zero only if $j=a$ or $j=c$. 
In the first case, the wedge product $S^I\wedge S^a$ vanishes since by induction $j\in I$. Thus 
the result of the product \eqref{pral}
consists of terms of the form $S^I\wedge S^a\wedge S^b\otimes[S_i\otimes S^a]$. 
By the induction hypothesis $i\in I$. If $a=i$, again the wedge product vanishes, so we may retain only
the terms with $a\neq i$. Evidently $a\in I\cup\{a\}$, proving the induction step.
\end{proof}
This lemma implies that for $l\geq 1$, the endomorphism-valued differential form $\theta'\imath_M^*(R^s)^l$ 
is off-diagonal. For $l=0$ the same holds due to \eqref{ect}.
Its trace thus vanishes, and since $\imath_M^*\theta$ is the skew-symmetric component of $2\theta'$ and $l$ is odd,
we conclude that $\imath_M^*\tr(\theta(R^s)^l)=0$ as claimed.
\end{proof}

\section{Conclusion}

Let us derive the proofs of the statements announced in the beginning of the paper.
\begin{proof}[Proof of theorem \ref{ifW0}]
Let $g$ be a metric on $X$ which is not necessarily of product type.
Let $(D,A)$ be one of the pairs of elliptic operators from the statement of theorem \ref{thin} corresponding to
$g$ and the induced metric $h$ on $M$. Let $g_0$ be the product type metric constructed from $g$
in the beginning of section \ref{sect3} by using the normal geodesic flow from the boundary, and $D_0$
the corresponding operator on $X$.

From the index theorem for the metric $g_0$, we write
\[\ind(D_0)=\int_X \exp\left[\tfrac{1}{2}\tr\log(P(R^0/2\pi i))\right]- \tfrac{1}{2}\eta(A).\]
Define a power series $Q(x):=\tfrac{1}{2}\log(P(x/2\pi i))$ (only the truncation up to degree $4k$ matters here).
From proposition \ref{difexptr}
\[e^{\tr Q\left(R^1\right)}-e^{\tr Q\left(R^0\right)} = 
d\int_0^1 \tr\left(\theta Q'\left(R^s\right)\right)
e^{\tr Q\left(R^s\right)}\]
hence by the Stokes formula,
\[
\int_X e^{\tr Q\left(R^1\right)}-e^{\tr Q\left(R^0\right)} = \int_0^1 \int_M \tr\left(\theta Q'\left(R^s\right)\right)
e^{\tr Q\left(R^s\right)}.
\]
Assume now that the Weyl tensor of $g$ vanishes at the boundary. By proposition \ref{w0p0}, 
the above right-hand side vanishes, therefore
\[\ind(D_0)=\int_X \exp\left[\tfrac{1}{2}\tr\log(P(R^1/2\pi i))\right]- \tfrac{1}{2}\eta(A).\]
To end the proof it is enough to notice that $\ind(D_0)=\ind(D)$, by the homotopy invariance of the Fredholm index.
\end{proof}

\begin{proof}[Proof of theorem \ref{tve}]
If $g$ is locally conformally flat, its Weyl tensor vanishes on $X$, in particular it vanishes at every $p\in M$,
and so theorem \ref{ifW0} applies. Moreover, 
the Pontriagin form $\tr\log(P(R^1/2\pi i))$ vanishes identically, since in general it only depends on the Weyl tensor.
Therefore we deduce 
\[\tfrac{\eta(A)}{2}=\ind(D)\in\bZ.\]
\end{proof}

\section{Consequences}

The eta invariant of the odd signature operator is known for various classes of $3$-manifolds, including lens spaces \cite{apsii}, \cite{katase}
and hyperbolic manifolds \cite{coulson}. 
These manifolds are quotients of the standard $3$-sphere, respectively of the hyperbolic $3$-space, by some discrete group of isometries,
so they can be locally embedded isometrically in $\bR^4$. By a result of Hirsch \cite{hirsch}, every oriented compact $3$-manifold 
can be $C^\infty$ immersed in $R^4$, and is moreover the boundary of some oriented compact $X$ from Thom \cite{thom}. 
Theorem \ref{tve} implies that a closed oriented $3$-manifold
cannot be isometric to the boundary of a locally conformally flat manifold, unless the sum of the eta invariants of its connected 
components is an even integer.

For instance, the lens space $L(3,1)$ is not the boundary of a locally conformally flat $4$-manifold since its eta invariant is $-\frac{1}{3}$.
Eleven out of the $12$ closed hyperbolic $3$-manifolds from \cite[Table 2]{coulson} have non-integral eta invariant, so they do not bound
locally conformally flat $4$-manifolds.

The eta invariant of the Dirac operator on Bieberbach (i.e., flat and closed) oriented $3$-manifolds was computed by Pf\"affle \cite{pfafle}. 
It is a topological invariant of the fundamental group and the spin structure.
He found that for the manifolds $G_j$, $j\in\{2,3,4,5\}$ from the Hantsche-Wendt classification of Bieberbach groups in dimension $3$, there exist
spin structures $\sigma_j$ for which the eta invariant modulo $2$ equals $-\frac{2}{k}$, where $k\geq 2$ is the order of the holonomy group 
of $G_j$ \cite[Theorem 5.6]{pfafle}. The spin cobordism group is trivial in dimension $3$, so for each $j$ there exists a spin manifold
$(X_j,\tau_j)$ with spin boundary $(G_j,\sigma_j)$.
Theorem \ref{tve} asserts that $X_j$ cannot carry a locally conformally flat metric extending some flat metrics from $G_j$.

\end{document}